\documentclass[preprint,12pt,number]{elsarticle}

\usepackage{amssymb}
\usepackage{amsmath}
\usepackage{amsthm}
\usepackage{mathtools}
\usepackage{url}
\usepackage[margin=1.1in]{geometry}
\usepackage[colorlinks=true,linkcolor=blue,citecolor=blue,urlcolor=blue]{hyperref}
\usepackage{booktabs}

\newtheorem{theorem}{Theorem}
\newtheorem{lemma}[theorem]{Lemma}

\newtheorem{corollary}[theorem]{Corollary}
\newtheorem{conjecture}[theorem]{Conjecture}
\theoremstyle{remark}

\newcommand{\Lr}{\operatorname{L}}

\begin{document}
\begin{frontmatter}

\title{On a conjecture due to Kanade related to Nahm sums}

\author[Berk]{Cetin Hakimoglu-Brown}
\cortext[mycorrespondingauthor]{Corresponding author}
\ead{mathemails@proton.me}
\address[Berk]{Berkeley, CA, USA}

\begin{abstract}
Kanade explored the construction of modular companions to $q$-series identities,
using the asymptotics of Nahm sums, and Mizuno [Ramanujan J. 66 (2025), Article 62, 31 pp.] recently obtained a generalization of Kanade's asymptotic formula
for symmetrizable Nahm sums. A related conjecture from Kanade concerning the
dilogarithm function and related to the work of Kur\c{s}ung\"oz on
Andrews--Gordon-type series [Ann. Comb. 23 (2019), 835--888] has remained open.
In this paper, we prove Kanade's conjecture, through an application of
dilogarithm identities due to Kirillov together with a dilogarithm ladder due
to Loxton and Lewin. Inspired by  Kanade's result, we extend this to prove two new related dilogarithm identities.

\end{abstract}

\begin{keyword}
Rogers dilogarithm \sep dilogarithm identity \sep Nahm sum \sep five-term relation
\MSC[2020] 33B30 \sep 11P84
\end{keyword}

\end{frontmatter}

\section{Introduction}

The Rogers dilogarithm is:
\begin{equation}\label{eq:Ldef}
\Lr(x)=\operatorname{Li}_2(x)+\frac12\log x\log(1-x),
\qquad 0<x<1,
\end{equation}
with $\Lr(1)=\pi^2/6$.  We shall use Euler's reflection relation:
\begin{equation}\label{eq:reflection}
\Lr(u)+\Lr(1-u)=\Lr(1),\qquad 0<u<1,
\end{equation}
and Rogers' five-term relation:
\begin{equation}\label{eq:fiveterm}
\Lr(u)+\Lr(v)=\Lr(uv)
+\Lr\!\left(\frac{u(1-v)}{1-uv}\right)
+\Lr\!\left(\frac{v(1-u)}{1-uv}\right),
\qquad 0<u,v<1.
\end{equation}
Taking $u=v$ gives the duplication formula:
\begin{equation}\label{eq:dup}
\Lr(u^2)=2\Lr(u)-2\Lr\!\left(\frac{u}{1+u}\right).
\end{equation}

The problem belongs to the Nahm-sum program relating modular
$q$-hypergeometric series, algebraic equations, and dilogarithm identities.
Zagier's 2007 account gave a standard formulation of the problem, recorded
the rank-one classification of Terhoeven and Zagier, and listed eleven
rank-two candidate sets from a computer search~\cite{Zagier}; Vlasenko and
Zwegers subsequently
developed the asymptotic method in higher rank~\cite{VlasenkoZwegers}.
Kanade's search for modular companions was motivated in part by identities
and conjectures of Kur\c{s}ung\"oz and the Kanade--Russell
program~\cite{Kanade,Kursungoz}.  Mizuno later extended the framework to
symmetrizable matrices~\cite{Mizuno}.  One calculation in Kanade's paper led to the following dilogarithm problem, which was left open.

\begin{conjecture}[Kanade]\label{prop:kanadeconj}
Let
\begin{equation}\label{eq:Qdef}
Q_1=1-2\sin\frac{\pi}{18},\qquad
Q_2^3=4\sin^2\frac{\pi}{18}+4\sin\frac{\pi}{18},
\end{equation}
where $Q_2\in(0,1)$.  Then
\begin{equation}\label{eq:kanade}
\Lr(Q_1)+\frac13\Lr(Q_2^3)=\frac{4\pi^2}{27}.
\end{equation}
\end{conjecture}

The first purpose of this paper is to prove Conjecture~\ref{prop:kanadeconj}.
Kanade asked whether the identity could be related to the $\pi/9$
dilogarithm identities of Loxton and Lewin.  Kirillov gave an algebraic
derivation of those identities from the five-term relation~\cite[\S1.2.2]{Kirillov}.
The organization below follows Kirillov's idea.  The complete set of
Kirillov relations used here is stated explicitly, and the subsequent
elimination and cancellation are carried out in full, so the reader need
not reconstruct any step from the cited paper.

Next we use the rank-two exponent equations to derive a previously
unrecorded weight-$19$ analogue of~\eqref{eq:kanade}.  A second identity follows
from this relation and Kanade's identity.  Section~\ref{sec:qseries} briefly
discusses their relation to Mizuno's symmetrizable Nahm sums.

\section{Proof of Kanade's conjecture}

The proof is arranged in two stages.  First, the elementary five-term array
underlying the argument is written out and reduced.  Second, Loxton's known
$\pi/9$ identity is inserted and the remaining cancellation is performed
explicitly.

\begin{lemma}[Algebraic setup]\label{lem:algsetup}
Set:
\begin{equation}\label{eq:xyz}
x=\frac12\sec\frac{\pi}{9},\qquad
y=\frac{1}{1+x}=\frac12\sec\frac{2\pi}{9},\qquad
z=\frac{x}{1+x}=1-y,
\end{equation}
and introduce the auxiliary quantities:
\begin{equation}\label{eq:aux}
a=\frac{1}{x+2},\qquad
b=\frac{x^2}{1+x},\qquad
c=\frac{1}{x(x+2)},\qquad
h=\frac{x}{1+2x}.
\end{equation}
The quantities $a,c,h$ serve only to name intermediate arguments in the
five-term array below; they disappear when the array is reduced.
\end{lemma}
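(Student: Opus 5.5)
The only nontrivial claim in Lemma~\ref{lem:algsetup} is the trigonometric identity $\frac{1}{1+x}=\frac12\sec\frac{2\pi}{9}$ for $x=\frac12\sec\frac{\pi}{9}$. The relation $z=\frac{x}{1+x}=1-y$ is immediate from $y=\frac1{1+x}$, and $a,b,c,h$ are simply definitions. I will therefore reduce everything to the minimal polynomial of $2\cos\frac{\pi}{9}$.

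Put $t=2\cos\frac{\pi}{9}$, so that $x=1/t$. By the triple-angle formula $2\cos 3\theta=(2\cos\theta)^3-3(2\cos\theta)$ with $\theta=\pi/9$, we get $t^3-3t=2\cos\frac{\pi}{3}=1$. Hence $t^3-3t-1=0$. By the double-angle formula, $2\cos\frac{2\pi}{9}=t^2-2$. The claim $\frac{1}{1+x}=\frac{1}{2\cos(2\pi/9)}$ is equivalent to $1+\frac1t=t^2-2$. Multiplying by $t\neq0$, this becomes $t+1=t^3-2t$, which is exactly the cubic $t^3-3t-1=0$. This proves the middle equality in \eqref{eq:xyz}.

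For later use in the five-term array, I would also record that $x$ satisfies a cubic. Substituting $t=1/x$ into $t^3-3t-1=0$ gives $x^3+3x^2-1=0$, so $x^2(x+3)=1$. This relation lets any rational expression in $x$ be reduced to the basis $\{1,x,x^2\}$. For example, $b=\frac{x^2}{1+x}$ and $c=\frac{1}{x(x+2)}$ can be rewritten using it, which is what makes the later "disappearance" of $a,c,h$ checkable.

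Finally, I would verify that every auxiliary argument lies in $(0,1)$, so that \eqref{eq:reflection}, \eqref{eq:fiveterm} and \eqref{eq:dup} apply. Since $\cos\frac{\pi}{9}\approx0.940$, we have $x\approx0.532\in(\frac12,1)$. From this, $y,z\in(0,1)$, $a=\frac1{x+2}\in(0,\frac12)$, $h=\frac{x}{1+2x}<\frac12$, and $b=\frac{x^2}{1+x}<1$. For $c$, note $x(x+2)\approx1.35>1$, so $c<1$. A cleaner certificate for $x(x+2)>1$ is $x>\sqrt2-1$, which holds because $x>\frac12$. There is no genuine obstacle here. The only point needing care is choosing the correct sign of $\cos$ and the correct root of the cubic, which is settled by the numerical bounds on $x$.
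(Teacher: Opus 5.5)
Your proposal is correct, and it does more than the paper does. The paper gives no argument for this lemma: it presents it as a list of definitions and asserts $\frac{1}{1+x}=\frac12\sec\frac{2\pi}{9}$ without justification, just as it later asserts $Q_1=y$ in the proof of Theorem~\ref{thm:kanade}.

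Your reduction to $t^3-3t-1=0$ for $t=2\cos\frac{\pi}{9}$ supplies that missing verification cleanly. Your derived relation $x^2(x+3)=1$ is also what the later array actually uses; for instance, the duplication row for $a$ needs $\frac{a}{1+a}=\frac{1}{x+3}=x^2$. Likewise, your range checks for $y,z,a,b,c,h$ supply hypotheses that the paper leaves to Kirillov for this array.

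One small correction to your closing remark: no choice of root or sign is involved. You define $t$ directly as $2\cos\frac{\pi}{9}$ and derive the cubic from it, so $1+\frac1t=t^2-2$ holds outright. The numerical bounds are needed only for the range checks.

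The same technique also settles the paper's other unproved identification. The number $w=t^2-2=2\cos\frac{2\pi}{9}$ satisfies $w^3-3w+1=0$, hence $1-2\sin\frac{\pi}{18}=1-2\cos\frac{4\pi}{9}=3-w^2=\frac1w=y$.
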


\begin{lemma}[Kirillov's five-term array]\label{lem:kirarray}
With the notation of Lemma~\ref{lem:algsetup}, the complete subarray needed
for Kanade's identity is:
\begin{align}
\Lr(y^2)&=2\Lr(y)-2\Lr(a),\label{eq:k12}\\
\Lr(a^2)&=2\Lr(a)-2\Lr(x^2),\label{eq:k13}\\
\Lr(z^2)&=2\Lr(z)-2\Lr(h),\label{eq:k14}\\
\Lr(y)+\Lr(a)&=\Lr(h)+\Lr(x)+\Lr(b),\label{eq:k15}\\
\Lr(x)+\Lr(x^2)&=\Lr(x^3)+\Lr(a^2)+\Lr(1)-\Lr(c^2),\label{eq:k16}\\
\Lr(c^2)&=2\Lr(c)-2\Lr(y^2),\label{eq:k17}\\
\Lr(c)&=\Lr(y)+\frac12\Lr(z^2).\label{eq:k18}
\end{align}
\end{lemma}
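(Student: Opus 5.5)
The plan is to verify each of the seven relations \eqref{eq:k12}--\eqref{eq:k18} separately. Each will be a specialization of the duplication formula \eqref{eq:dup}, the five-term relation \eqref{eq:fiveterm} or Euler's reflection \eqref{eq:reflection}. The only arithmetic input is the minimal polynomial of $x$.

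\textbf{The algebraic input.} Write $\theta=\pi/9$. From $\cos 3\theta=4\cos^3\theta-3\cos\theta=\tfrac12$ and $\cos\theta=1/(2x)$ we get
\begin{equation*}
x^3+3x^2-1=0 .
\end{equation*}
The identity $y=1/(1+x)=\tfrac12\sec(2\pi/9)$ asserted in Lemma~\ref{lem:algsetup} is checked the same way, using $\cos 2\theta=2\cos^2\theta-1$. All arguments involved lie in $(0,1)$, since $x\approx0.532$, so the real-variable forms of the relations apply. From here on, every identification of arguments is done by clearing denominators and reducing modulo $x^3+3x^2-1$.

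\textbf{The four duplications.} Relations \eqref{eq:k12}, \eqref{eq:k13}, \eqref{eq:k14} and \eqref{eq:k17} are \eqref{eq:dup} with $u=y,a,z,c$ respectively. One only needs $u/(1+u)$ to equal the stated second argument.
\begin{itemize}
\item For $u=y$: $y/(1+y)=1/(x+2)=a$.
\item For $u=z$: $z/(1+z)=x/(1+2x)=h$.
\item For $u=c$: $c/(1+c)=1/(x+1)^2=y^2$.
\item For $u=a$: $a/(1+a)=1/(x+3)$, and this equals $x^2$ exactly because $x^2(x+3)=1$ by the cubic.
\end{itemize}

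\textbf{Relation \eqref{eq:k18}.} By \eqref{eq:k14}, $\tfrac12\Lr(z^2)=\Lr(z)-\Lr(h)$, so \eqref{eq:k18} is equivalent to $\Lr(c)+\Lr(h)=\Lr(y)+\Lr(z)$. The right side equals $\Lr(1)$ by reflection, since $y+z=1$. So it suffices to show $c+h=1$, which by reflection gives the left side as $\Lr(1)$ as well. Clearing denominators, $c+h=1$ reads
\begin{equation*}
x^3+2x^2+2x+1=2x^3+5x^2+2x,
\end{equation*}
which is again the cubic.

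\textbf{The two genuine five-term instances.} These are where the real work lies.
\begin{itemize}
\item For \eqref{eq:k16}, take $u=x$ and $v=x^2$ in \eqref{eq:fiveterm}. This produces $\Lr(x^3)$ together with the arguments $x(1+x)/(1+x+x^2)$ and $x^2/(1+x+x^2)$. I expect one of these to reduce to $a^2=1/(x+2)^2$ and the other to $1-c^2$. The latter term then converts to $\Lr(1)-\Lr(c^2)$ by reflection.
\item For \eqref{eq:k15}, take $u=y$ and $v=a$. The product is $uv=1/((1+x)(x+2))$, and the second argument is $u(1-v)/(1-uv)=(1+x)/(x^2+3x+1)$. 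The three right-hand arguments must be matched with $h$, $x$ and $b$ after reduction by the cubic. If the naive pairing fails, I would instead try $(u,v)=(y,x)$ or $(a,x)$, or move terms across using reflection, and choose whichever assignment makes the resulting rational functions agree modulo the cubic.
\end{itemize}

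Finding the correct pairing of arguments in these two five-term instances, and checking each rational identity modulo $x^3+3x^2-1$, is the step I expect to be the main obstacle. Once the pairings are fixed, each check is a finite polynomial computation.
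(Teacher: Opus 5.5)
Your verification is correct. It differs from the paper, whose proof of this lemma is only a citation of Kirillov's array (\S1.2.2, pp.~78--79) and does not check the arguments.

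The two five-term instances you left open close with exactly the pairings you proposed, so no fallback is needed.

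For \eqref{eq:k15}, take $(u,v)=(y,a)$. The three right-hand arguments are:
\begin{itemize}
\item $uv=1/(x^2+3x+2)=h$, because $x(x^2+3x+2)=1+2x$;
\item $u(1-v)/(1-uv)=(1+x)/(x^2+3x+1)=x$, because $x(x^2+3x+1)=1+x$;
\item $v(1-u)/(1-uv)=x/(x^2+3x+1)=x^2/(1+x)=b$, by the same relation.
\end{itemize}

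For \eqref{eq:k16}, take $(u,v)=(x,x^2)$. Using $x^3=1-3x^2$ and $x^4=x-3+9x^2$, one gets $x^2(x+2)^2=1+x+x^2$. Hence $x^2/(1+x+x^2)=a^2$ and $1/(1+x+x^2)=c^2$. So $x(1+x)/(1+x+x^2)=1-c^2$, and reflection turns this term into $\Lr(1)-\Lr(c^2)$.

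Your route makes the lemma self-contained. It shows that the array consists of four duplications, two five-term specializations and reflections, with $x^3+3x^2-1=0$ as the only algebraic input. That is what the paper's remark that ``the reader need not reconstruct any step from the cited paper'' actually requires. The paper's citation is shorter, but the transcription of Kirillov's relations is left unchecked in the text. Your argument also shows that \eqref{eq:k18} is not an independent five-term instance. It follows from \eqref{eq:k14} together with the complementarities $y+z=1$ and $c+h=1$.
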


\begin{proof}
These seven identities are the relevant part of Kirillov's displayed array
in~\cite[\S1.2.2, pp.~78--79]{Kirillov}.  They are reproduced here so that
the elimination used below can be checked directly from the displayed
relations.
\end{proof}

The auxiliary values are most easily removed one at a time.  We begin with
those that occur in only two rows.  From~\eqref{eq:k12} and
\eqref{eq:k13}:
\[
\Lr(a)=\Lr(y)-\frac12\Lr(y^2),
\qquad
\Lr(a^2)=2\Lr(y)-\Lr(y^2)-2\Lr(x^2).
\]
Likewise, \eqref{eq:k18} and \eqref{eq:k17} give:
\[
\Lr(c)=\Lr(y)+\frac12\Lr(z^2),
\qquad
\Lr(c^2)=2\Lr(y)+\Lr(z^2)-2\Lr(y^2).
\]
Finally, \eqref{eq:k14} gives:
\[
\Lr(h)=\Lr(z)-\frac12\Lr(z^2).
\]
The auxiliary quantities can now simply be substituted away.  Substituting
the displayed expressions for $\Lr(a^2)$ and $\Lr(c^2)$ into
\eqref{eq:k16} gives:
\begin{equation}\label{eq:reduced1}
\Lr(x^3)-3\Lr(x^2)-\Lr(x)
=\Lr(z^2)-\Lr(y^2)-\Lr(1).
\end{equation}
Substituting the displayed expressions for $\Lr(a)$ and $\Lr(h)$ into
\eqref{eq:k15} gives:
\begin{equation}\label{eq:reduced2}
2\Lr(y)-\frac12\Lr(y^2)
=\Lr(z)-\frac12\Lr(z^2)+\Lr(x)+\Lr(b).
\end{equation}
Thus $a,a^2,c,c^2,h$ have disappeared by direct substitution, leaving only
\eqref{eq:reduced1}--\eqref{eq:reduced2}.

The remaining external input is the following proved $\pi/9$ identity of
Loxton~\cite{Loxton,Lewin}:
\begin{equation}\label{eq:loxton}
\Lr(x^3)-3\Lr(x^2)-3\Lr(x)=-\frac73\Lr(1).
\end{equation}
\begin{theorem}\label{thm:kanade}
Kanade's identity~\eqref{eq:kanade} holds.
\end{theorem}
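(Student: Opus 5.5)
The plan is to identify Kanade's two arguments with quantities already present in Lemma~\ref{lem:algsetup}. After that identification, \eqref{eq:kanade} should follow from the two reduced relations \eqref{eq:reduced1}--\eqref{eq:reduced2}, Loxton's identity \eqref{eq:loxton} and the reflection relation \eqref{eq:reflection}, by linear elimination alone. Numerically, $Q_1\approx 0.6527\approx y$, $z\approx 0.3473\approx 2\sin\frac{\pi}{18}$, and $1-Q_2^3\approx 0.1848\approx b$. So the first step is to prove the three trigonometric facts
\[
y=1-2\sin\tfrac{\pi}{18},\qquad z=2\sin\tfrac{\pi}{18},\qquad Q_2^3=z^2+2z=1-b .
\]
Since $z=1-y$, the second fact is equivalent to the first.

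For the first fact I write $\sin\frac{\pi}{18}=\cos\frac{4\pi}{9}$. Then $y=1-2\cos\frac{4\pi}{9}$ becomes, after multiplying by $2\cos\frac{2\pi}{9}$, the identity $1=2\cos\frac{2\pi}{9}-4\cos\frac{2\pi}{9}\cos\frac{4\pi}{9}$. Product-to-sum turns the right-hand side into $2\cos\frac{2\pi}{9}-2\cos\frac{6\pi}{9}-2\cos\frac{2\pi}{9}=-2\cos\frac{2\pi}{3}=1$, as required. For the third fact, $Q_2^3=4s^2+4s=z^2+2z$ is immediate once $z=2s$. I then need $1-z^2-2z=b$, which reads $2y-y^2=b$ using $z=1-y$. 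Substituting $y=1/(1+x)$ and $b=x^2/(1+x)$ reduces this to the polynomial relation $x^3+x^2-2x-1=0$ after clearing denominators. This relation should follow from $x=\frac12\sec\frac{\pi}{9}$ and the triple-angle identity for $\cos\frac{\pi}{9}$; that is, $1/x=2\cos\frac{\pi}{9}$ satisfies $t^3-3t-1=0$, and I will check that this transforms into the cubic for $x$. This verification, and getting the exact cubic right, is the step I expect to need the most care. It is routine, but it is where a slip would break everything.

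With these facts in hand, the reflection relation gives $\Lr(Q_2^3)=\Lr(1)-\Lr(b)$. Since $4\pi^2/27=\frac89\Lr(1)$, the target \eqref{eq:kanade} is equivalent to
\[
3\Lr(y)-\Lr(b)=\tfrac53\Lr(1).
\]
Next I solve \eqref{eq:reduced2} for $\Lr(b)$, using $\Lr(z)=\Lr(1)-\Lr(y)$ from \eqref{eq:reflection}:
\[
\Lr(b)=3\Lr(y)-\Lr(1)-\Lr(x)+\tfrac12\bigl(\Lr(z^2)-\Lr(y^2)\bigr).
\]
Subtracting Loxton's \eqref{eq:loxton} from \eqref{eq:reduced1} removes $\Lr(x^3)$ and $\Lr(x^2)$ and gives $2\Lr(x)=\Lr(z^2)-\Lr(y^2)+\frac43\Lr(1)$. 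Hence $\frac12(\Lr(z^2)-\Lr(y^2))=\Lr(x)-\frac23\Lr(1)$. Substituting this, the $\Lr(x)$ terms cancel and $\Lr(b)=3\Lr(y)-\frac53\Lr(1)$. This is exactly the required relation, which proves Theorem~\ref{thm:kanade}.
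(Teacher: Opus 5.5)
Your dilogarithmic elimination is exactly the paper's: subtract \eqref{eq:loxton} from \eqref{eq:reduced1}, insert the result into \eqref{eq:reduced2}, and use reflection to get $3\Lr(y)-\Lr(b)=\frac53\Lr(1)$ before reflecting once more. Your product-to-sum check of $y=1-2\sin\frac{\pi}{18}$ is also correct. The error is in your verification of $Q_2^3=1-b$, which the paper only asserts but which you set out to prove. With $z=1-y$,
\[
1-z^2-2z=2-(1+z)^2=2-(2-y)^2=-2+4y-y^2,
\]
not $2y-y^2$. Your relation $2y-y^2=b$ is false: $2y-y^2=1-z^2\approx 0.879$, while $b\approx 0.185$. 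So the cubic you reduce to, $x^3+x^2-2x-1=0$, is the wrong one. Its roots are $2\cos\frac{2\pi k}{7}$, and $x\approx 0.532$ is not among them. The check you defer would therefore fail, because substituting $t=1/x$ into $t^3-3t-1=0$ gives the different cubic $x^3+3x^2-1=0$.

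The repair is one line. Put $y=1/(1+x)$ and $b=x^2/(1+x)$ into the correct relation $b=-2+4y-y^2$ and multiply by $(1+x)^2$. This gives $x^3+x^2=1-2x^2$, i.e.\ $x^3+3x^2-1=0$, which is exactly the transformed triple-angle relation. With this correction your argument is complete and coincides with the paper's proof. Yours simply makes explicit the identification $Q_1=y$, $Q_2^3=1-b$ that the paper states ``from the definitions''.
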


\begin{proof}
The $x^3$ term can now be removed.  Subtracting
\eqref{eq:loxton} from~\eqref{eq:reduced1} and rearranging gives:
\begin{equation}\label{eq:diffsq}
\Lr(y^2)-\Lr(z^2)=\frac43\Lr(1)-2\Lr(x).
\end{equation}
Rearranging~\eqref{eq:reduced2} and then using~\eqref{eq:diffsq} gives:
\begin{align}
2\Lr(y)-\Lr(b)
&=\Lr(x)+\Lr(z)+\frac12\bigl(\Lr(y^2)-\Lr(z^2)\bigr)\notag\\
&=\Lr(z)+\frac23\Lr(1).\label{eq:intermediate}
\end{align}
Since $y+z=1$, Euler's relation gives $\Lr(z)=\Lr(1)-\Lr(y)$, and hence:
\begin{equation}\label{eq:keykanade}
3\Lr(y)-\Lr(b)=\frac53\Lr(1).
\end{equation}
This is the entire dilogarithmic cancellation.  Applying Euler's relation
once more:
\begin{equation}\label{eq:kanadefinish}
\Lr(y)+\frac13\Lr(1-b)
 =\frac13\bigl(3\Lr(y)-\Lr(b)+\Lr(1)\bigr)
 =\frac89\Lr(1)=\frac{4\pi^2}{27}.
\end{equation}

It remains only to identify the arguments with Kanade's notation.  From
the definitions:
\[
Q_1=\frac{1}{1+x}=y,
\qquad
Q_2^3=1-\frac{x^2}{1+x}=1-b.
\]
Thus \eqref{eq:kanadefinish} is exactly \eqref{eq:kanade}.
\end{proof}

\noindent We henceforth write:
\begin{equation}\label{eq:jbeta}
j:=y=\frac12\sec\frac{2\pi}{9},\qquad
\beta:=b=\frac{(1-j)^2}{j}.
\end{equation}
Then~\eqref{eq:keykanade} is equivalently:
\begin{equation}\label{eq:jbetaid}
3\Lr(j)-\Lr(\beta)=\frac{5\pi^2}{18}.
\end{equation}
The number $j$ satisfies:
\begin{equation}\label{eq:jcubic}
j^3-3j^2+1=0.
\end{equation}

\section{A rank-two exponent-matrix reduction}\label{sec:matrix}

We use the following two-variable form of Nahm's equations, written with a common denominator $u$ so that matrices with rational entries are covered. Zagier treats the symmetric positive-definite setting in~\cite{Zagier}; Mizuno extends it to symmetrizable matrices~\cite{Mizuno}.
Let $F$ be a field containing $z_0$ and $z_1$, and work in
$\bigwedge^2(F^\times\otimes_{\mathbb Z}\mathbb Q)$ so that rational
exponents may be handled linearly.  The wedge product is bilinear and
alternating, so $v\wedge v=0$ and $v\wedge w=-w\wedge v$.  Write:
\begin{equation}\label{eq:exponentmatrix}
B=\frac1u\begin{pmatrix}a&M\\ m&d\end{pmatrix}
\end{equation}
and consider:
\begin{equation}\label{eq:exponentsys}
1-z_0=z_0^{a/u}z_1^{M/u},\qquad
1-z_1=z_0^{m/u}z_1^{d/u},\qquad 0<z_0,z_1<1.
\end{equation}

\begin{lemma}[Wedge cancellation]\label{lem:wedge}
Every solution of~\eqref{eq:exponentsys} satisfies:
\begin{equation}\label{eq:wedgecancel}
m\,z_0\wedge(1-z_0)+M\,z_1\wedge(1-z_1)=0.
\end{equation}
Indeed,
\[
z_0\wedge(1-z_0)=\frac{M}{u}\,z_0\wedge z_1,
\qquad
z_1\wedge(1-z_1)=-\frac{m}{u}\,z_0\wedge z_1,
\]
and multiplying by $m$ and $M$, respectively, gives~\eqref{eq:wedgecancel}.
Thus the corresponding two-term Rogers dilogarithm expression has the form:
\begin{equation}\label{eq:twotermform}
m\Lr(z_0)+M\Lr(z_1).
\end{equation}
\end{lemma}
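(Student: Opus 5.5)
The plan is to work entirely in $\bigwedge^2(F^\times\otimes_{\mathbb Z}\mathbb Q)$ and use only bilinearity and the alternating property, as in the statement of Lemma~\ref{lem:wedge}. Since $0<z_0,z_1<1$, the quantities $z_0,z_1,1-z_0,1-z_1$ all lie in $F^\times$. The exponent equations~\eqref{eq:exponentsys} hold in $F^\times$ once rational powers are read in $F^\times\otimes\mathbb Q$. Writing that group additively, they become
\[
(1-z_0)=\frac{a}{u}\,z_0+\frac{M}{u}\,z_1,\qquad (1-z_1)=\frac{m}{u}\,z_0+\frac{d}{u}\,z_1 .
\]
These are identities of vectors in $F^\times\otimes\mathbb Q$. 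Here we use that for positive reals the real rational powers are compatible with the formal $\mathbb Q$-tensoring, since positive reals have unique positive roots. This is the only point needing a remark.

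Next, wedge the first relation on the left with $z_0$. By bilinearity,
\[
z_0\wedge(1-z_0)=\frac{a}{u}\,z_0\wedge z_0+\frac{M}{u}\,z_0\wedge z_1=\frac{M}{u}\,z_0\wedge z_1,
\]
because $z_0\wedge z_0=0$. Similarly, wedge the second relation with $z_1$:
\[
z_1\wedge(1-z_1)=\frac{m}{u}\,z_1\wedge z_0+\frac{d}{u}\,z_1\wedge z_1=-\frac{m}{u}\,z_0\wedge z_1,
\]
using $z_1\wedge z_1=0$ and antisymmetry.

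Finally, form $m\,z_0\wedge(1-z_0)+M\,z_1\wedge(1-z_1)=\frac{mM}{u}z_0\wedge z_1-\frac{Mm}{u}z_0\wedge z_1=0$, which is~\eqref{eq:wedgecancel}. The last sentence of the lemma is interpretive rather than a further claim. In the Bloch-group formalism, $\Lr(z)$ corresponds to $[z]$ with $\delta[z]=z\wedge(1-z)$, so the combination $m[z_0]+M[z_1]$ has vanishing $\delta$. Hence $m\Lr(z_0)+M\Lr(z_1)$ is the natural two-term expression, expected to be a rational multiple of $\pi^2$ by the standard Bloch--Suslin/Nahm theory. I would state this as the motivation, not prove it.

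There is no real obstacle. The only care needed is justifying the passage from real rational powers to the $\mathbb Q$-vector space $F^\times\otimes\mathbb Q$, including the observation that torsion such as signs is killed by $\otimes\mathbb Q$. This is harmless here because all quantities are positive.
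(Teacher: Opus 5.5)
Your proposal is correct and follows essentially the same argument as the paper. You write the exponent equations additively in $F^\times\otimes\mathbb{Q}$, wedge with $z_0$ and $z_1$ so the diagonal terms vanish, and combine with weights $m$ and $M$; your extra remark on reading real rational powers in the $\mathbb{Q}$-tensored group (equivalently, raising to the $u$-th power to get an integer-exponent identity in $F^\times$) justifies a step the paper leaves implicit.
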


\noindent Take the diagonal matrix:
\begin{equation}\label{eq:symmetrizergeneral}
D=\operatorname{diag}(M,m).
\end{equation}
Then $BD$ is symmetric.  When $BD$ is positive definite, the pair $(B,D)$
is of the type used in Mizuno's symmetrizable Nahm sums.  The first two
examples below have $M=1$; the third has $M=3$.

\begin{lemma}[One-variable reduction]\label{lem:exponentreduce}
Assume $M=1$ in \eqref{eq:exponentmatrix}.  Then:
\begin{equation}\label{eq:z1eliminate}
z_1=\frac{(1-z_0)^u}{z_0^a},
\end{equation}
and every solution of \eqref{eq:exponentsys} satisfies:
\begin{equation}\label{eq:scalarpoly}
z_0^a-(1-z_0)^u-(1-z_0)^d z_0^p=0,
\qquad
p=\frac{au+m-da}{u}.
\end{equation}
\end{lemma}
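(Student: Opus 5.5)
The plan is to solve the first equation of \eqref{eq:exponentsys} for $z_1$ and substitute the result into the second equation. Because we work with real numbers in $(0,1)$, every power $t^{r}$ with $t>0$ and $r\in\mathbb{Q}$ is a well-defined positive real number. The usual exponent laws $(t^r)^s=t^{rs}$ and $t^rt^s=t^{r+s}$ therefore hold without branch issues, and I will use them freely.

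\emph{Step 1: solving for $z_1$.} With $M=1$ the first equation of \eqref{eq:exponentsys} reads
\[
1-z_0=z_0^{a/u}z_1^{1/u}.
\]
Both sides are positive. Raising to the $u$-th power gives $(1-z_0)^u=z_0^{a}z_1$. Dividing by $z_0^a>0$ yields \eqref{eq:z1eliminate}.

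\emph{Step 2: substituting into the second equation.} From \eqref{eq:z1eliminate},
\[
z_1^{d/u}=(1-z_0)^{d}\,z_0^{-ad/u}.
\]
Inserting this into $1-z_1=z_0^{m/u}z_1^{d/u}$ gives
\[
1-\frac{(1-z_0)^u}{z_0^a}=(1-z_0)^d\,z_0^{(m-ad)/u}.
\]
Multiplying through by $z_0^a$ gives
\[
z_0^a-(1-z_0)^u=(1-z_0)^d\,z_0^{a+(m-ad)/u}.
\]
The exponent on the right is $a+(m-ad)/u=(au+m-da)/u=p$. Moving that term to the left gives \eqref{eq:scalarpoly}.

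There is no real obstacle here. The only point needing care is the exponent arithmetic with rational exponents $a/u$, $d/u$, $m/u$. The assumption $0<z_0,z_1<1$ in \eqref{eq:exponentsys} makes all bases positive, which justifies raising to the $u$-th power and combining exponents. I would note explicitly that this positivity is why the real-power manipulations are valid. I would also remark that \eqref{eq:scalarpoly} becomes a genuine polynomial equation once $p$, $u$, $d$, $a$ are nonnegative integers, which will be the case in the examples considered later.
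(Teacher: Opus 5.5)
Your proposal is correct and follows the paper's own proof: solve the first equation of \eqref{eq:exponentsys} for $z_1$, substitute into the second, and multiply by $z_0^a$. Your exponent bookkeeping, $a+(m-ad)/u=p$, and your appeal to positivity of the bases to justify the rational-power manipulations simply make explicit what the paper's two-line proof leaves implicit.
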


\begin{proof}
The first equation of \eqref{eq:exponentsys} gives
\eqref{eq:z1eliminate}.  Substituting it into the second equation and
multiplying by $z_0^a$ gives \eqref{eq:scalarpoly}.
\end{proof}

\subsection{Kanade's cubic}

\noindent Kanade's identity corresponds to:
\begin{equation}\label{eq:BK}
B_K=\begin{pmatrix}2&1\\3&2\end{pmatrix},\qquad
D_K=\operatorname{diag}(1,3).
\end{equation}
The product
\[
B_KD_K=\begin{pmatrix}2&3\\3&6\end{pmatrix}
\]
is symmetric positive definite.  Since $(m,M)=(3,1)$, Lemma~\ref{lem:wedge}
selects the two-term combination $3\Lr(z_0)+\Lr(z_1)$, which is three times
the normalization in Kanade's identity.  The corresponding equations are:
\begin{equation}\label{eq:Kanadeexponentsys}
1-z_0=z_0^2z_1,\qquad
1-z_1=z_0^3z_1^2.
\end{equation}
Lemma~\ref{lem:exponentreduce}, with $(u,a,m,d)=(1,2,3,2)$, gives:
\[
z_0^2-(1-z_0)-z_0(1-z_0)^2
=-(z_0^3-3z_0^2+1).
\]
Thus $z_0=j=Q_1$.  Moreover, using~\eqref{eq:jcubic}:
\[
z_1=\frac{1-j}{j^2}=1-\frac{(1-j)^2}{j}=Q_2^3.
\]
Hence the two-variable system recovers exactly the two arguments in
Kanade's identity.

\section{A weight-\texorpdfstring{$19$}{19} analogue and a second identity}
\label{sec:further}

\noindent Let $j$ be as in~\eqref{eq:jbeta}, and define:
\begin{equation}\label{eq:alphabeta}
\alpha=j^2(1-j)^3=3-7j^2,
\qquad
\beta=\frac{(1-j)^2}{j}=-2+4j-j^2.
\end{equation}
They lie in $(0,1)$ and have minimal polynomials:
\begin{equation}\label{eq:alphabetaminpoly}
\alpha^3+54\alpha^2-57\alpha+1=0,
\qquad
\beta^3+3\beta^2-6\beta+1=0.
\end{equation}

The first identity below is exact.  Its proof is a
finite Kirillov-style calculation and is placed in \ref{app:m19}
so that the main discussion is not interrupted by the larger cancellation
array.

\begin{theorem}[The weight-$19$ identity]\label{thm:m19}
Let $j$ be the root in $(0,1)$ of $j^3-3j^2+1=0$, and let
$\alpha=j^2(1-j)^3$.  Then
\begin{equation}\label{eq:m19identity}
19\Lr(j)+\Lr(\alpha)=2\pi^2.
\end{equation}
\end{theorem}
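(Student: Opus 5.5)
The plan is to imitate the proof of Theorem~\ref{thm:kanade}. First I would check that~\eqref{eq:m19identity} is consistent with the Bloch group. Then I would produce an explicit finite array of five-term relations~\eqref{eq:fiveterm} and duplication formulas~\eqref{eq:dup} whose arguments all lie in the cubic field $\mathbb{Q}(j)=\mathbb{Q}(\cos\frac{2\pi}{9})$. Finally I would eliminate the auxiliary arguments and close the argument with Loxton's identity~\eqref{eq:loxton} and the already-proved relation~\eqref{eq:jbetaid}.

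The consistency check uses the relations $1-j=z=xj$ and $\alpha=j^2(1-j)^3=x^3j^5$. In $\bigwedge^2(F^\times\otimes\mathbb{Q})$ I would compute
\[
j\wedge(1-j)\quad\text{and}\quad \alpha\wedge(1-\alpha),
\]
writing $1-\alpha=7j^2-2$ and factoring it into the unit group generated by $-1$, $j$, $1-j$ and the other conjugate-type units such as $1+x$ and $\beta$. The aim is to confirm that $19\,j\wedge(1-j)+\alpha\wedge(1-\alpha)=0$, in the spirit of Lemma~\ref{lem:wedge} with $(m,M)=(19,1)$. This check also produces the list of units, roughly $\{x,\;1+x,\;j,\;1-j,\;\beta,\;1-\beta,\;\alpha,\;1-\alpha\}$ together with a few products, which will serve as candidate five-term arguments.

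Next I would build the array. Starting from $\alpha=x^3j^5$, I would apply~\eqref{eq:fiveterm} to pairs $(u,v)$ drawn from this unit set. Natural first choices are $(u,v)=(x^3,j^5)$ and the splittings $j^5=j^2\cdot j^3$. I would also use duplication on $j^2$ and on $z^2$, with the latter already eliminated via $h$ as in the proof of Theorem~\ref{thm:kanade}. I keep only those instances whose other two arguments $\frac{u(1-v)}{1-uv}$ and $\frac{v(1-u)}{1-uv}$ either stay inside the unit set or reduce by~\eqref{eq:reflection}. Collecting these relations gives a linear system over $\mathbb{Q}$ in the unknowns $\Lr(\cdot)$. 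I would solve it for $\Lr(\alpha)$ as a rational combination of $\Lr(j)$, $\Lr(\beta)$, $\Lr(x)$, $\Lr(x^2)$, $\Lr(x^3)$, $\Lr(y^2)$, $\Lr(z^2)$ and $\Lr(1)$.

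After that, the relations~\eqref{eq:reduced1}, \eqref{eq:diffsq}, \eqref{eq:loxton} and~\eqref{eq:keykanade}, together with $\Lr(z)=\Lr(1)-\Lr(j)$, should remove every term except $\Lr(j)$ and $\Lr(1)$. The Bloch-group check guarantees that the coefficient of $\Lr(j)$ comes out as $-19$. The rational coefficient of $\Lr(1)$ is then forced, and a numerical evaluation serves as a safeguard: it should equal $12$, since $2\pi^2=12\Lr(1)$.

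The main obstacle is finding a closed five-term array. The symbol computation only shows that some combination of five-term relations exists; it does not produce one. What I would try first is a systematic search over pairs $(u,v)$ in the multiplicative group generated by the units above, up to small exponents. For each pair I would test whether all five arguments are $S$-units in this group, and then do linear algebra over $\mathbb{Q}$ to isolate $\Lr(\alpha)$. This is the procedure Kirillov uses for the $\pi/9$ identities, so it should terminate with an array comparable in size to Lemma~\ref{lem:kirarray}, though larger.
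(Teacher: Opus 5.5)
\paragraph{The missing step.} What you have written is a search plan, not a proof. The step that carries all the content is left undone: a finite set of instances of \eqref{eq:fiveterm} and \eqref{eq:reflection}, with every argument verified to lie in $(0,1)$, together with an integer combination in which all auxiliary values cancel. You call this the main obstacle and leave it to a search that ``should terminate''.

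Your symbol check is correct: from $1-\alpha=j^5/(1-j)^2$ one gets $\alpha\wedge(1-\alpha)=-19\,j\wedge(1-j)$. But it cannot replace the certificate. Vanishing in $\bigwedge^2$ only places $19[j]+[\alpha]$ in the Bloch group of $F=\mathbb{Q}(j)$. To conclude that some multiple of it is a sum of five-term relations you need the deep fact that this Bloch group is torsion ($F$ is totally real; Borel and Suslin). Even then, those relations may use arguments of $F$ outside $(0,1)$, where \eqref{eq:fiveterm} does not hold as stated. So neither the existence of your array nor the value $12\Lr(1)$ follows from what you have.

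\paragraph{What the paper supplies.} The paper gives exactly this piece:
\begin{itemize}
\item eleven specializations $F_1,\dots,F_{11}$ built from $j$, $\eta=1-j$ and $\lambda=2-j^{-1}$;
\item nine reflections $R_1,\dots,R_9$, each verified because its defect is a multiple of $j^3-3j^2+1$;
\item the explicit combination $\Sigma_F+\Sigma_R=19\Lr(j)+\Lr(\alpha)-12\Lr(1)$.
\end{itemize}
It uses neither Loxton's identity nor \eqref{eq:jbetaid}.

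\paragraph{Your search space would miss this certificate.} Since $j^{-1}=1+x$ and $\eta/j=x$, the auxiliary element is $\lambda=1-x$. Because $x^3+3x^2-1=0$, the norm of $\lambda$ is $3$, so $\lambda$ is not a unit. Hence arguments such as $\lambda/j$, $\eta/\lambda$ and $j^2\lambda/\eta$ lie outside the group generated by your units $x,1+x,j,1-j,\beta,1-\beta,\alpha,1-\alpha$.

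Your own first trial $(u,v)=(x^3,j^5)$ already leaves that group, since $1-x^3=3x^2$. Kirillov's $a,c,h$ in Lemma~\ref{lem:kirarray} likewise involve the prime above $3$. A serious search must therefore run over $\{3\}$-units and check $0<u,v<1$ row by row. Until a specific array is written down and verified, the theorem has not been proved.
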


\begin{proof}
See \ref{app:m19}.  There the identity is obtained from eleven
distinct specializations of Rogers' five-term relation, followed by nine
Euler reflections and one explicit integer linear combination.
\end{proof}

\noindent Euler reflection gives an equivalent form with an exponent matrix having
$M=1$.  Set $r=1-j$ and $s=1-\alpha$.  Then Theorem~\ref{thm:m19}
is equivalent to:
\begin{equation}\label{eq:m19reflected}
19\Lr(r)+\Lr(s)=\frac{4\pi^2}{3},
\end{equation}
with exponent matrix:
\begin{equation}\label{eq:B1}
B_1=\frac15\begin{pmatrix}2&1\\19&2\end{pmatrix}.
\end{equation}
Since $(m,M)=(19,1)$, Lemma~\ref{lem:wedge} gives the coefficient pattern
$19\Lr(r)+\Lr(s)$ in~\eqref{eq:m19reflected}.  For
$(u,a,m,d)=(5,2,19,2)$, Lemma~\ref{lem:exponentreduce} gives:
\[
s=\frac{(1-r)^5}{r^2}
\]
and the one-variable equation:
\begin{align}
&r^2-(1-r)^5-(1-r)^2r^5 \notag\\
&\qquad=-(r^2-r+1)^2(r^3-3r+1)=0.\label{eq:m19cleanfactor}
\end{align}
Since $r=1-j$ satisfies $r^3-3r+1=0$, the relevant factor is the cubic.
A direct substitution using $j^3-3j^2+1=0$ also gives:
$s=j^5/(1-j)^2=1-\alpha$.  Thus
\eqref{eq:B1} encodes the reflected weight-$19$ identity with no auxiliary
high-degree polynomial.

The second identity follows from Theorem~\ref{thm:m19} and
the already proved Kanade relation~\eqref{eq:jbetaid}.

\begin{corollary}\label{cor:m19second}
With $\alpha$ and $\beta$ as in~\eqref{eq:alphabeta},
\begin{equation}\label{eq:m19second}
19\Lr(\beta)+3\Lr(\alpha)=\frac{13\pi^2}{18}.
\end{equation}
\end{corollary}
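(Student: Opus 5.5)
We obtain \eqref{eq:m19second} by taking an integer linear combination of Theorem~\ref{thm:m19} and Kanade's relation in the form~\eqref{eq:jbetaid}, choosing the coefficients so that $\Lr(j)$ cancels. No further dilogarithm identity is required.

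Start from \eqref{eq:jbetaid}, $3\Lr(j)-\Lr(\beta)=\tfrac{5\pi^2}{18}$, and multiply it by $19$:
\[
57\Lr(j)-19\Lr(\beta)=\frac{95\pi^2}{18}.
\]
Next take \eqref{eq:m19identity}, $19\Lr(j)+\Lr(\alpha)=2\pi^2$, and multiply it by $3$:
\[
57\Lr(j)+3\Lr(\alpha)=6\pi^2=\frac{108\pi^2}{18}.
\]
Subtracting the first display from the second removes $57\Lr(j)$ and leaves
\[
19\Lr(\beta)+3\Lr(\alpha)=\frac{108\pi^2-95\pi^2}{18}=\frac{13\pi^2}{18},
\]
which is exactly \eqref{eq:m19second}.

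The only consistency point is that the $\beta$ in \eqref{eq:alphabeta} is the same number as the $\beta=b$ of \eqref{eq:jbeta}. Both are defined as $(1-j)^2/j$, so they agree. The polynomial form $\beta=-2+4j-j^2$ follows from $j^3-3j^2+1=0$, which gives $1/j=3j-j^2$.

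The expansion of $(1-j)^2(3j-j^2)$ should be checked, but it is not used in the argument. There is no real obstacle: the substantive work is contained in Theorem~\ref{thm:kanade} and Theorem~\ref{thm:m19}, and the corollary is linear arithmetic in $\pi^2$. The only care needed is to use Kanade's relation in its $\beta$ form~\eqref{eq:jbetaid} rather than its $1-\beta$ form~\eqref{eq:kanade}.
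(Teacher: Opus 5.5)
Your proposal is correct and is essentially the paper's own proof: multiply the weight-$19$ identity by $3$ and Kanade's relation in the form $3\Lr(j)-\Lr(\beta)=\frac{5\pi^2}{18}$ by $19$, then subtract so that $57\Lr(j)$ cancels, leaving $\frac{108\pi^2-95\pi^2}{18}=\frac{13\pi^2}{18}$. Your remark that the $\beta$ of the two displays is the same number $(1-j)^2/j$ is the only consistency check needed, and the reduction $(1-j)^2(3j-j^2)=-2+4j-j^2$ modulo $j^3-3j^2+1$ does hold, though, as you note, it is not used.
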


\begin{proof}
Multiply~\eqref{eq:m19identity} by $3$ and multiply
\eqref{eq:jbetaid} by $19$.  Subtracting the latter equality from the
former gives:
\[
19\Lr(\beta)+3\Lr(\alpha)
=6\pi^2-\frac{95\pi^2}{18}
=\frac{13\pi^2}{18}.
\]
\end{proof}

\noindent Reflect both arguments in the second identity and set
$r=1-\beta$, $s=1-\alpha$.  Equation~\eqref{eq:m19second} becomes:
\begin{equation}\label{eq:m19secondreflected}
19\Lr(r)+3\Lr(s)=\frac{53\pi^2}{18}.
\end{equation}
Using $j^3-3j^2+1=0$ in the definitions of $\alpha$ and $\beta$ gives:
\begin{equation}\label{eq:B2sys}
1-r=r^8s^3,\qquad
1-s=r^{19}s^8,
\end{equation}
with exponent matrix:
\begin{equation}\label{eq:B2}
B_2=\begin{pmatrix}8&3\\19&8\end{pmatrix}.
\end{equation}
Here $(m,M)=(19,3)$, and Lemma~\ref{lem:wedge} gives exactly the coefficient
pattern $19\Lr(r)+3\Lr(s)$ in~\eqref{eq:m19secondreflected}.  The first
coordinate is exactly Kanade's second argument, $r=1-\beta=Q_2^3$; it has
minimal polynomial:
\[
r^3-6r^2+3r+1=0.
\]
Equations \eqref{eq:m19cleanfactor} and \eqref{eq:B2sys} are the only
matrix calculations needed for the two additional identities.

\section{Numerical modularity check}\label{sec:qseries}

\noindent Kanade's identity arose from a Nahm-type $q$-series.  We
therefore check whether the two weight-$19$ evaluations produce a modular
candidate in Mizuno's symmetrizable framework~\cite{Mizuno}, using his
public implementation~\cite{MizunoCode}.

For the first reflected realization~\eqref{eq:B1}, the natural diagonal
matrix $D_1=\operatorname{diag}(1,19)$ gives:
\[
B_1D_1=\frac15\begin{pmatrix}2&19\\19&38\end{pmatrix},
\qquad
\det(B_1D_1)=-\frac{57}{5}<0 .
\]
Since $B_1D_1$ is symmetric and has negative determinant, it has one positive
and one negative eigenvalue.  Hence it is indefinite and does not satisfy
the positive-definiteness condition in Mizuno's search.  This does not by
itself make the associated series divergent: every entry of $B_1D_1$ is
positive, so the quadratic form is nonnegative on the summation cone.  The
finite-difference test can still be applied numerically, and we report the
outcome below, with the caveat that the asymptotic analysis underlying that
test is proved only in the positive-definite case.

The Neumann--Zagier system attached to $(B_1,D_1)$ is
\eqref{eq:exponentsys}, with
\[
(u,a,m,d)=(5,2,19,2).
\]
Its solution is $(r,s)=(1-j,1-\alpha)$ as in~\eqref{eq:m19reflected}.
The effective central charge is:
\[
\frac{6}{\pi^2}\bigl(19\Lr(j)+\Lr(\alpha)\bigr)=12 ,
\]
by Theorem~\ref{thm:m19}.  Mizuno's implementation locates this solution by a
Newton iteration from a fixed starting point; that iteration does not converge
on the present input, so the value above is obtained from the solution
$(r,s)=(1-j,1-\alpha)$ directly.

The second reflected realization uses:
\begin{equation}\label{eq:D2}
D_2=\operatorname{diag}(3,19),
\end{equation}
for which:
\begin{equation}\label{eq:B2sym}
B_2D_2=\begin{pmatrix}24&57\\57&152\end{pmatrix},
\qquad
\det(B_2D_2)=399>0 .
\end{equation}
Hence $B_2D_2$ is symmetric positive definite and passes the first test in
Mizuno's implementation.  The Neumann--Zagier system used by the
central-charge routine is:
\[
w_0=(1-w_0)^8(1-w_1)^3,
\qquad
w_1=(1-w_0)^{19}(1-w_1)^8 ,
\]
which with $w_0=\beta$ and $w_1=\alpha$ is precisely the reflected
system~\eqref{eq:B2sys}.  The effective central charge is:
\[
\frac{6}{\pi^2}\bigl(19\Lr(\beta)+3\Lr(\alpha)\bigr)=\frac{13}{3},
\]
by Corollary~\ref{cor:m19second}.  The pair therefore also passes
Mizuno's rational central-charge test.  We note in passing that the two
central charges, $12$ and $13/3$, are exactly the two identities of
Section~\ref{sec:further} in normalized form.

\medskip
\noindent\textbf{The searches.}
The driver published with~\cite{MizunoCode} reports only those candidates that
meet its cutoff, and prints nothing when a search finds none.  We therefore
called its rank-two routines directly, at the constants fixed in that source:
for a single linear term, $N=21,\ldots,24$ with the summation truncated at
$150$ in each index; for the paired test below, $N=18,19,20$ with truncation
$75$.  The linear terms searched are those of the published driver,
\[
b=(b_0/q,b_1/q),\qquad -4\le b_0,b_1\le10,\quad 1\le q\le4 .
\]
No choice of $b$ met the program's modularity cutoff of $10^{-10}$ in both
finite-difference residuals, for either realization.  The smallest maximum
residual attained was $5.3\times10^{-7}$ for $(B_2,D_2)$ and
$1.8\times10^{-7}$ for $(B_1,D_1)$, in both cases more than three orders of
magnitude above the cutoff.

Mizuno's implementation also provides a paired search, which tests the
combination $f_{A,b_1}+3f_{A,b_2}$ over ordered pairs $(b_1,b_2)$ drawn
from the same range, since a modular companion may exist for a combination
of two Nahm sums when it exists for neither separately.  Over the
resulting $8\times10^5$ pairs, no combination met the cutoff for either
realization; the closest were $1.6\times10^{-10}$ for $(B_2,D_2)$ and
$5.3\times10^{-10}$ for $(B_1,D_1)$.  Repeating the sweep with the
combination coefficient $3$ replaced by each of
$1,2,4,5,6,9,19,\tfrac13,\tfrac{3}{19},\tfrac{19}{3}$ produced occasional
values below the cutoff, but the paired test evaluates a single second
difference from three values of $N$, and at that resolution such values
occur by chance at this sample size.  Every one of them was retested at
the constants of the single-$b$ test, with five values of $N$ and three
second differences; all failed by four to six orders of magnitude.

\medskip
\noindent This is numerical evidence only and does not exclude a modular
companion arising from a different $q$-series construction.

\medskip
\noindent\textbf{Verification.}
Two Python programs accompany this paper. The first,
\path{verify_dilogarithm_identities.py}, checks the dilogarithm identities,
the algebraic factorizations, and the certificate in
\ref{app:m19}, with the latter verified exactly in
$\mathbb{Q}(j)=\mathbb{Q}[t]/(t^3-3t^2+1)$. The second,
\path{verify_mizuno_modularity.py}, reproduces the numerical calculations
reported in Section~\ref{sec:qseries}, including the Neumann--Zagier solution,
effective central charge, and modularity searches. Its rank-two routines
follow those in Mizuno's public implementation~\cite{MizunoCode}.

\medskip
\noindent\textbf{AI disclosure.}
ChatGPT, using GPT-5.6 Sol, was used to assist in creating the verification
scripts for the dilogarithm certificates and modularity checks, and in
carrying out the numerical modularity tests.

\appendix
\section{Explicit five-term proof of the weight-$19$ identity}
\label{app:m19}

We prove Theorem~\ref{thm:m19} by a finite five-term certificate.  Set:
\begin{equation}\label{eq:xylambda}
\eta=1-j,\qquad
\lambda=2-j^{-1}.
\end{equation}
Since $j^3-3j^2+1=0$ and $j\ne0$:
\begin{equation}\label{eq:xinverse}
j^{-1}=3j-j^2.
\end{equation}
Also:
\begin{equation}\label{eq:alphaXY}
\alpha=j^2\eta^3.
\end{equation}
All algebraic simplifications below use $\eta=1-j$, $\lambda=2-j^{-1}$,
and~\eqref{eq:xinverse}.  The elementary bounds
$0.1736<\sin(\pi/18)<0.17365$ give $0.6527<j<0.6528$.
Interval evaluation of the distinct inputs occurring on the left of
$F_1,\ldots,F_{11}$ shows that every such input lies between $0.051$ and $0.968$.
Hence the hypotheses $0<u,v<1$ of the five-term relation are satisfied,
and the right-hand arguments are then automatically in $(0,1)$.  Every entry
of the table in Lemma~\ref{lem:m19pairs} occurs among these inputs or among the
right-hand arguments of $F_1,\ldots,F_{11}$, so each reflection row below also
has its argument in $(0,1)$.

\begin{lemma}[The $m=19$ five-term array]\label{lem:m19array}
With the notation above, the following eleven identities are instances of
Rogers' five-term relation~\eqref{eq:fiveterm}:
\begin{align}
\Lr(\lambda/j)+\Lr(j)
 &=2\Lr(\lambda)+\Lr(\eta),\tag{$F_1$}\label{eq:F1}\\
\Lr(\eta/j)+\Lr(j\eta)
 &=\Lr(\lambda)+2\Lr(\eta^2),\tag{$F_2$}\label{eq:F2}\\
\Lr(\eta/\lambda)+\Lr(\eta)
 &=\Lr(\eta^2/\lambda)+\Lr(j)+\Lr(\eta^2),\tag{$F_3$}\label{eq:F3}\\
\Lr(\eta/j^2)+\Lr(j^3/\eta)
 &=\Lr(j)+\Lr(\lambda)+\Lr(j^2),\tag{$F_4$}\label{eq:F4}\\
\Lr(\lambda/j)+\Lr(j^2/\lambda)
 &=\Lr(j)+\Lr(\eta^2/j)+\Lr(\eta/\lambda),\tag{$F_5$}\label{eq:F5}\\
\Lr\!\bigl(\eta^2/(j\lambda)\bigr)+\Lr(\lambda)
 &=\Lr(\eta^2/j)+\Lr(\eta^2/\lambda)+\Lr(\eta),\tag{$F_6$}\label{eq:F6}\\
2\Lr(j)
 &=\Lr(j^2)+2\Lr\!\bigl(\eta^2/(j\lambda)\bigr),\tag{$F_7$}\label{eq:F7}\\
\Lr(\lambda/j)+\Lr(\lambda)
 &=\Lr(\lambda^2/j)+\Lr(j^2\lambda/\eta)+\Lr(j^2\lambda),\tag{$F_8$}\label{eq:F8}\\
2\Lr(j\eta)
 &=\Lr(j^2\eta^2)+2\Lr(\eta^2/j),\tag{$F_9$}\label{eq:F9}\\
\Lr\!\bigl(\lambda^2/(j\eta)\bigr)+\Lr(\eta/\lambda)
 &=\Lr(\lambda/j)+\Lr(j\lambda/\eta)+\Lr(\eta^3/\lambda),\tag{$F_{10}$}\label{eq:F10}\\
\Lr(\eta)+\Lr(j^2\eta^2)
 &=\Lr(\alpha)+\Lr(\lambda^2/j)+\Lr(\eta^4/j^2).\tag{$F_{11}$}\label{eq:F11}
\end{align}
\end{lemma}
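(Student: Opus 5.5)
To prove Lemma~\ref{lem:m19array}, I will check each row separately. For each row I identify $(u,v)$ with the two arguments on the left-hand side. I then verify that $uv$, $u(1-v)/(1-uv)$ and $v(1-u)/(1-uv)$ reduce, as elements of $\mathbb{Q}(j)=\mathbb{Q}[t]/(t^3-3t^2+1)$, to the three arguments on the right, counted with multiplicity. The rows $F_7$ and $F_9$ are the duplication case $u=v$ of~\eqref{eq:fiveterm}, i.e.\ formula~\eqref{eq:dup}. For example, $F_9$ is~\eqref{eq:dup} with $u=j\eta$, and it requires $j\eta/(1+j\eta)=\eta^2/j$, i.e.\ $j^2=\eta(1+j\eta)$. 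Substituting $\eta=1-j$ turns this into $j^3-3j^2+1=0$. Similarly, $F_7$ needs $j/(1+j)=\eta^2/(j\lambda)$. Using $j\lambda=2j-1$, this becomes $j(2j-1)=(1-j)^2(1+j)$, which is again the cubic. The rows with a repeated argument on the right, such as $F_1$ and $F_2$, are still genuine five-term instances; there two of the three outputs simply coincide.

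The tool throughout is the normal form of~\eqref{eq:xinverse}. Every element of $\mathbb{Q}(j)$ can be written as $c_0+c_1j+c_2j^2$, with $j^{-1}=3j-j^2$, $\eta=1-j$ and $\lambda=2-3j+j^2$. For each row I compute the three candidate outputs in this basis. Inverting $1-uv$ is done with the adjugate of the multiplication-by-$(1-uv)$ matrix, or equivalently by cross-multiplying: I check $u(1-v)=\text{(target)}\cdot(1-uv)$ as a polynomial identity modulo the cubic, which avoids any division. Matching the outputs to the listed arguments is then a comparison of coefficient vectors. Since both sides of~\eqref{eq:fiveterm} are symmetric in the two cross terms, I only need the multiset of outputs to agree. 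A convenient first check is $uv$, because in most rows it is visibly a monomial in $j,\eta,\lambda$. For instance, in $F_{11}$ we have $\eta\cdot j^2\eta^2=\alpha$ by~\eqref{eq:alphaXY}, and in $F_4$ we have $(\eta/j^2)(j^3/\eta)=j$.

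For the hypothesis $0<u,v<1$, I will use the bound $0.6527<j<0.6528$ already recorded. From it I derive interval enclosures for $\eta$ and for $\lambda=2-1/j\in(0.4679,0.4681)$. Monotone interval arithmetic then places every left-hand argument, for example $\lambda/j$, $\eta/\lambda$, $j^3/\eta$ and $\lambda^2/(j\eta)$, strictly inside $(0,1)$. Once $0<u,v<1$, the right-hand arguments automatically lie in $(0,1)$. I should also confirm that each algebraic identity holds for the real root $j\approx0.6527$. It does, because all identities are checked in $\mathbb{Q}(j)$ and then specialized to that real embedding.

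The main obstacle is purely computational bookkeeping. The rows $F_8$, $F_{10}$ and $F_{11}$ involve $\lambda^2/(j\eta)$, $\eta^4/j^2$ and $\eta^3/\lambda$, and each needs several reductions modulo the cubic, including inverses of $\eta$ and $\lambda$. I would first compute $\eta^{-1}$ and $\lambda^{-1}$ in normal form once, from $\eta\cdot\eta^{-1}=1$ and $\lambda\cdot\lambda^{-1}=1$, and reuse them throughout. I would also cross-check every row numerically to high precision before doing the exact reduction, so that any mislabeled output is caught immediately.
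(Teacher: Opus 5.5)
Your proposal is correct and follows essentially the paper's route: specialize \eqref{eq:fiveterm} at the two left-hand arguments of each row, reduce the outputs in $\mathbb{Q}(j)$ via $j^{-1}=3j-j^2$, treat $F_7$ and $F_9$ as the duplication case, and get $0<u,v<1$ from interval bounds on $j$. Your sample reductions for $F_7$, $F_9$ and for $uv$ in $F_4$ and $F_{11}$ are all correct. One cosmetic slip: $0.6527<j<0.6528$ gives only $\lambda\in(0.46790,0.46814)$, not $(0.4679,0.4681)$, which is harmless because every left-hand input stays more than $0.03$ away from $0$ and $1$.
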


\begin{proof}
For each row, substitute the two arguments on the left into
\eqref{eq:fiveterm}.  To identify the three arguments on the right with
those displayed in $F_1,\ldots,F_{11}$, substitute $\eta=1-j$ and
$\lambda=2-j^{-1}$ and replace every occurrence of $j^{-1}$ by
$3j-j^2$ using~\eqref{eq:xinverse}.  For example, $F_1$ is the
specialization $(u,v)=(\lambda/j,j)$: its three five-term arguments are
\[
uv=\lambda,
\qquad
\frac{u(1-v)}{1-uv}=\lambda,
\qquad
\frac{v(1-u)}{1-uv}=\eta.
\]
The other ten rows are checked identically.  The repeated-argument rows
$F_7$ and $F_9$ are the $u=v$ specialization of the five-term relation,
i.e. the duplication formula~\eqref{eq:dup} written in the same format.
\end{proof}

The Euler reflections needed after the eleven-row cancellation are recorded
in the following table:

\begin{lemma}[Complementary pair table]\label{lem:m19pairs}
With $j,\eta,\lambda$ as above, the following nine pairs are complementary:

\begin{center}
\begin{tabular}{@{}cccc@{}}
\toprule
Row & $u$ & $1-u$ & Numerical values $(u,1-u)$\\
\midrule
$R_1$ & $\eta/j^2$          & $\eta^2/j$
      & $(0.815207,\;0.184793)$\\
$R_2$ & $\eta^4/j^2$        & $\lambda^2/(j\eta)$
      & $(0.034148,\;0.965852)$\\
$R_3$ & $\eta/j$            & $\lambda$
      & $(0.532089,\;0.467911)$\\
$R_4$ & $\eta/\lambda$      & $\eta^2/\lambda$
      & $(0.742227,\;0.257773)$\\
$R_5$ & $\eta$              & $j$
      & $(0.347296,\;0.652704)$\\
$R_6$ & $\eta^2$            & $j\lambda/\eta$
      & $(0.120615,\;0.879385)$\\
$R_7$ & $\eta^3/\lambda$    & $j^2/\lambda$
      & $(0.089524,\;0.910476)$\\
$R_8$ & $j^2\lambda/\eta$   & $j^2$
      & $(0.573978,\;0.426022)$\\
$R_9$ & $j^2\lambda$        & $j^3/\eta$
      & $(0.199340,\;0.800660)$\\
\bottomrule
\end{tabular}
\end{center}

Thus row $R_i$ gives the reflection identity:
\begin{equation}\label{eq:Ri}
\mathcal R_i:=\Lr(u_i)+\Lr(1-u_i)-\Lr(1)=0,
\qquad 1\le i\le 9.
\end{equation}
\end{lemma}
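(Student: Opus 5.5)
The plan is to verify each row $R_1,\dots,R_9$ as an exact identity $u_i+(1-u_i)=1$ in the field $\mathbb{Q}(j)=\mathbb{Q}[t]/(t^3-3t^2+1)$. For each row I would write the displayed entry in the third column as $v_i$, so the task is to show $u_i+v_i=1$.

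\textbf{Step 1 (normal form).} Every entry is a Laurent monomial in $j,\eta,\lambda$. I would substitute $\eta=1-j$ and $\lambda=2-j^{-1}=2-3j+j^2$, the latter by \eqref{eq:xinverse}. Next I would clear denominators by multiplying $u_i+v_i-1$ by the product of the denominators that occur. The denominators are powers of $j$, $\eta$ and $\lambda$, all of which are nonzero: $j\approx0.6527$, $\eta\approx0.3473$ and $\lambda\approx0.4679$. This leaves a polynomial $P_i(j)\in\mathbb{Q}[j]$. It then suffices to reduce $P_i$ modulo $j^3-3j^2+1$ and check that the remainder is $0$. Since $j^3-3j^2+1$ is the minimal polynomial of $j$, by \eqref{eq:jcubic} and irreducibility over $\mathbb{Q}$, this is equivalent to the identity.

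\textbf{Step 2 (the rows).} Several rows are immediate.

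$R_5$ is the definition of $\eta$.

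$R_3$ follows because $\eta/j+\lambda=j^{-1}-1+2-j^{-1}=1$.

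$R_4$ reads $\eta(1+\eta)=\lambda$. This says $(1-j)(2-j)=2-3j+j^2$, which already holds at the polynomial level.

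$R_1$ amounts to $\eta(1+j\eta)=j^2$. Expanding gives $1-2j^2+j^3$, and the cubic turns this into $j^2$.

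$R_8$ amounts to $j^2\lambda=\eta(1-j^2)$, which I would reduce in the same way.

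$R_9$ amounts to $j^2\lambda+j^3/\eta=1$, that is, $j^2\lambda\eta+j^3=\eta$. This is again a low-degree reduction.

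$R_6$ becomes $\eta^3+j\lambda=\eta$.

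$R_7$ becomes $\eta^3+j^2=\lambda$.

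$R_2$ becomes $\eta^5+\lambda^2 j=j^2\eta$.

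Each of these is a polynomial of degree at most about $5$, which is reduced by repeatedly using $j^3=3j^2-1$. Wherever $j^{-1}$ appears inside $\lambda$, it is handled by \eqref{eq:xinverse}.

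\textbf{Step 3 (ranges).} The reflection \eqref{eq:reflection} requires $0<u_i<1$. Each $u_i$ is a continuous function of $j$ on the interval $[0.6527,0.6528]$ given before the lemma. Interval arithmetic on that interval bounds every $u_i$ and $v_i$ strictly inside $(0,1)$, in agreement with the tabulated values; the smallest is about $0.034$ and the largest about $0.966$. Alternatively, one can observe, as stated before Lemma~\ref{lem:m19array}, that each entry is an input or output of some $F_k$ and so already lies in $(0,1)$. Then $\mathcal R_i=0$ follows from \eqref{eq:reflection}.

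I expect the main obstacle to be bookkeeping rather than any conceptual step. It is concentrated in $R_2$, where $\eta^5$ and $\lambda^2$ produce the largest polynomials. For that row I would first reduce $\eta^3=(1-j)^3$ to the basis $1,j,j^2$ and only then multiply further. This keeps the reduction mechanical, and the computation can be cross-checked with the exact arithmetic in $\mathbb{Q}(j)$ performed by the accompanying verification script.
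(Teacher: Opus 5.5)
Your proposal is correct and takes essentially the same route as the paper. Both substitute $\eta=1-j$, $\lambda=2-j^{-1}$ and check that each $u_i+(1-u_i)-1$ vanishes because $f(j)=j^3-3j^2+1=0$; the paper records each $\Delta_i$ as an explicit rational multiple of $f(j)$, whereas you clear denominators and reduce, and your row-by-row reductions, including $R_2$, all check out. Your range check in Step~3 matches the paper's remark before Lemma~\ref{lem:m19array}, and it also follows more simply from $j,\eta,\lambda>0$ together with $u_i+v_i=1$.
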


\begin{proof}
Let
\[
f(j)=j^3-3j^2+1.
\]
Substitute $\eta=1-j$ and $\lambda=2-j^{-1}$ into each row, and let
$\Delta_i$ denote the sum of the two displayed entries in row $R_i$ minus
$1$.  Direct simplification gives:
\[
\begin{aligned}
\Delta_1&=\frac{f(j)}{j^2},\\
\Delta_2&=\frac{f(j)(j^3-2j^2+3j-1)}{j^3(j-1)},\\
\Delta_3&=0,\\
\Delta_4&=\frac{f(j)}{2j-1},\\
\Delta_5&=0,\\
\Delta_6&=\frac{f(j)}{j-1},\\
\Delta_7&=-\frac{(j-1)f(j)}{2j-1},\\
\Delta_8&=\frac{f(j)}{j-1},\\
\Delta_9&=\frac{f(j)}{j-1}.
\end{aligned}
\]
Since $f(j)=0$, all nine quantities $\Delta_i$ vanish.  Hence every pair
in the table is complementary.
\end{proof}

\begin{proof}[Proof of Theorem~\ref{thm:m19}]
For $1\le i\le 11$, let $\mathcal F_i$ denote the left side of $F_i$
minus its right side, so that $\mathcal F_i=0$.  The required linear
combination of the eleven five-term rows is:
\begin{align}
\Sigma_F
&:=\mathcal F_1+2\mathcal F_2-3\mathcal F_3-\mathcal F_4-\mathcal F_5
   +4\mathcal F_6+2\mathcal F_7+\mathcal F_8-\mathcal F_9
   +\mathcal F_{10}-\mathcal F_{11} \notag\\
&=10\Lr(j)+2\Lr(\lambda)-9\Lr(\eta)+2\Lr(\eta/j)-\Lr(\eta^2)
   -\Lr(\eta/\lambda)-\Lr(\eta^2/\lambda) \notag\\
&\quad-\Lr(\eta/j^2)-\Lr(j^3/\eta)-\Lr(j^2)-\Lr(j^2/\lambda)
   -\Lr(\eta^2/j)-\Lr(j^2\lambda/\eta)-\Lr(j^2\lambda) \notag\\
&\quad+\Lr(\lambda^2/(j\eta))-\Lr(j\lambda/\eta)-\Lr(\eta^3/\lambda)
   +\Lr(\alpha)+\Lr(\eta^4/j^2)=0. \label{eq:m19SigmaF}
\end{align}

Using the numbered reflection rows $R_1,\ldots,R_9$ from
Lemma~\ref{lem:m19pairs}, form the second vanishing combination:
\begin{align}
\Sigma_R
&:=\mathcal R_1-\mathcal R_2-2\mathcal R_3+\mathcal R_4
   +9\mathcal R_5+\mathcal R_6+\mathcal R_7+\mathcal R_8+\mathcal R_9 \notag\\
&=\Lr(\eta/j^2)+\Lr(\eta^2/j)-\Lr(\eta^4/j^2)-\Lr(\lambda^2/(j\eta))
   -2\Lr(\eta/j)-2\Lr(\lambda) \notag\\
&\quad+\Lr(\eta/\lambda)+\Lr(\eta^2/\lambda)+9\Lr(\eta)+9\Lr(j)
   +\Lr(\eta^2)+\Lr(j\lambda/\eta) \notag\\
&\quad+\Lr(\eta^3/\lambda)+\Lr(j^2/\lambda)+\Lr(j^2\lambda/\eta)
   +\Lr(j^2)+\Lr(j^2\lambda)+\Lr(j^3/\eta)-12\Lr(1)=0.
   \label{eq:m19SigmaR}
\end{align}
Adding~\eqref{eq:m19SigmaF} and~\eqref{eq:m19SigmaR} cancels every
auxiliary dilogarithm value and gives:
\begin{equation}\label{eq:m19finalcancel}
\Sigma_F+\Sigma_R
=19\Lr(j)+\Lr(\alpha)-12\Lr(1)=0.
\end{equation}
Therefore:
\[
19\Lr(j)+\Lr(\alpha)=12\Lr(1)=2\pi^2.
\]
\end{proof}


\end{document}